\numberwithin{equation}{section}
\newtheorem{theorem}{Theorem}[section]
\newtheorem{lemma}[theorem]{Lemma}
\newtheorem{proposition}[theorem]{Proposition}
\newtheorem{definition}[theorem]{Definition}
\newtheorem{remark}[theorem]{Remark}
\newcommand{\be}{\begin{equation}}
\newcommand{\ee}{\end{equation}}
\newcommand{\bes}{\begin{equation*}}
\newcommand{\ees}{\end{equation*}}
\newcommand{\cA}{\mathcal{A}}
\newcommand{\cB}{\mathcal{B}}
\newcommand{\sB}{\mathscr{B}}
\newcommand{\mb}[1]{\mathbb{#1}}
\newcommand{\spn}{\operatorname{span}}
\begin{document}

\title[Three commuting unital CP maps with no dilation]{Three commuting, unital, completely positive maps that have no minimal dilation}

\author{Orr Moshe Shalit}
\address{Department of Pure Mathematics, University of Waterloo.}
\email{oshalit@uwaterloo.ca}
\author{Michael Skeide}
\address{Dipartimento S.E.G.e S.Universit\`a degli Studi del Molise.}
\email{skeide@unimol.it}
\keywords{Product system, subproduct system, semigroups of completely positive maps, dilation.}
\subjclass[2000]{46L55.}
\date{\today}

\begin{abstract}
{In this note we prove that there exist at least two examples of three commuting, unital, completely positive maps that have no dilation on a type I factor, and no minimal dilation on any von Neumann algebra.}
\end{abstract}

\maketitle
\section{Introduction}

\subsection{Definitions and statement of the main result} 
Let $\cA \subseteq \sB(H)$ be a von Neumann algebra. Let $\mb{S}$ be a a commutative semigroup with unit $0$. A \emph{CP-semigroup} (over $\mb{S}$) on $\cA$ is a family $T = \{T_s \}_{s \in \mb{S}}$ such that 
\begin{enumerate}
\item For all $s \in \mb{S}$, $T_s$ is a normal, contractive completely positive map (henceforth: CP map).
\item $T_0 = {\bf id}_{\cA}$.
\item For all $s,t \in \mb{S}$, 
\bes
T_s \circ T_t = T_{s+t}.
\ees
\end{enumerate}

A CP-semigroup $T$ is called a {\em Markov semigroup} (or a {\em CP$_0$-semigroup}) if $T_s$ is unital (i.e., $T_s(I_H) = I_H$) for every $s$. Every $k$-tuple of commuting CP maps gives rise naturally to a CP-semigroup over $\mb{N}^k$, and vice-versa. 

If every element of $T$ is a $*$-endomorphism, then $T$ is said to be an {\em E-semigroup}. An E-semigroup in which every element is unital is called an {\em E$_0$-semigroup}.

\begin{definition}\label{def:dilArv}
Let $T = \{T_s\}_{s\in\mb{S}}$ be a CP-semigroup on $\cA$. A {\em dilation} for $T$ is a quadruple $(p,K,\cB,\theta)$, where $K$ is a Hilbert space, $\cB \subseteq \sB(K)$ is a von Neumann algebra, $p \in \cB$ is the orthogonal projection $K\rightarrow H$, $\cA$ is contained in $\cB$ as a corner $\cA = p \cB p$, and $\theta = \{\theta_s\}_{s\in\mb{S}}$ is an E-semigroup on $\cB$ such that for all $s\in \mb{S}$, 
\be\label{eq:dilweaker}
T_s(a) = p \theta_s(a) p \quad , \quad \textrm{for all } a \in \cA.
\ee
\end{definition}

\begin{definition}
Let $T = \{T_s\}_{s\in\mb{S}}$ be a CP-semigroup on $\cA$. A {\em strong dilation} for $T$ is a dilation $(p,K,\cB,\theta)$ for $T$ such that for all $s\in \mb{S}$, 
\be\label{eq:dilArv}
T_s(pbp) = p \theta_s(b) p \quad , \quad \textrm{for all } b \in \cB.
\ee
\end{definition}

\begin{remark}\emph{
When $T$ is unit preserving, the condition (\ref{eq:dilArv}) is equivalent to the seemingly weaker condition (\ref{eq:dilweaker}). Indeed, if $T$ is unit preserving and $(p,K,\cB,\theta)$ is a dilation then $p = T_s(p) = p\theta_s(p) p$, thus $\theta_s(p) \geq p$. It follows that 
$$p\theta_s(b)p = p \theta_s(p) \theta_s(b) \theta_s(p) p = p\theta_s(pbp)p = T_s(pbp) ,$$
whence $(p,K,\cB,\theta)$ is a strong dilation. In general  (\ref{eq:dilArv}) is stronger than (\ref{eq:dilweaker}), hence the terminology. Some authors refer to a strong dilation as an \emph{E-dilation}.} 
\end{remark}
This notion of dilation has received much attention in the last two decades. It has been proved that a strong dilation exists for single CP maps and for one-parameter CP-semigroups \cite{Bhat96,SeLegue,BS00,MS02,Arv03}, for a pair of commuting CP maps \cite{Bhat98,Solel06} and for some two-parameter CP-semigroups \cite{Shalit08,Shalit09}. The result that a dilation exists for one-parameter semigroups is originally due to Bhat \cite{Bha99}.

In \cite[Section 5.3]{ShalitSolel} examples were given of three-tuples of commuting CP maps that have no (minimal) strong dilation. That raised the problem of finding sufficient conditions for a commuting $k$-tuple of CP maps to have a strong dilation. There were good reasons (which will be discussed below) to conjecture that every commuting $k$-tuple of {\em unital} CP maps has a (strong) dilation\footnote{The first named author has previously announced -- wrongly -- that this is true.}. In this paper we show that this conjecture fails to a very large extent.

To state our result precisely we need one more definition, due to Arveson (see Lemma \ref{lem:min_equiv} below for an equivalent definition).

\begin{definition}
In the notation of Definition \ref{def:dilArv}, $(p,K,\cB,\theta)$ is said to be {\em minimal} if the $W^*$-algebra generated by $\cup_{s\in\mb{S}} \theta_s(\cA)$ is equal to $\cB$, and if the central carrier of $p$ in $\cB$ (the minimal projection $q \in \cB \cap \cB'$ such that $qp = p$) is equal to $1_{\cB}$.
\end{definition}

We warn the reader that there are other natural definitions of {\em minimal dilation} that could be used, see for example \cite[Sections 8.3 and 8.9]{Arv03} or \cite{Bhat03}. In the one-parameter unit-preserving case, the above definition is equivalent to some of the other natural definitions. 

The main result of this note is the following:
\begin{theorem}\label{thm:main}
There exists a Hilbert space $H$ and three commuting unital CP maps on $\sB(H)$ that have no minimal dilation, and no dilation whatsoever (minimal or not) acting on a type I factor. One can take $\dim H = 6$.
\end{theorem}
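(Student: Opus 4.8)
The plan is to work entirely on the level of (sub)product systems, exploiting the dictionary between Markov semigroups over $\mb{N}^3$ on $\sB(H)$ and subproduct systems of Hilbert spaces, together with the principle that a strong dilation of the semigroup corresponds to an embedding of its subproduct system into a genuine product system. Since the three maps are unital, the Remark above guarantees that every dilation is automatically a strong dilation, so it suffices to exhibit a subproduct system $E=(E_n)_{n\in\mb{N}^3}$, with generating fibres chosen so that the associated $H$ has $\dim H = 6$, that admits \emph{no} dilating product system of Hilbert spaces, and then to upgrade this into the two assertions about type I factors and about minimal dilations on arbitrary von Neumann algebras.

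First I would set up the subproduct system concretely. A Markov semigroup over $\mb{N}^3$ is determined by its three generating fibres $E_1:=E_{e_1}$, $E_2:=E_{e_2}$, $E_3:=E_{e_3}$ together with the coisometric product maps, equivalently by the inclusions $E_{e_i+e_j}\subseteq E_i\otimes E_j$ and $E_{e_i+e_j}\subseteq E_j\otimes E_i$ for $i\ne j$. I would select these fibres and gluing maps so that each two-variable restriction is unobstructed --- consistent with the known positive result that every pair of commuting CP maps dilates --- while still pinning down the relevant data rigidly. The point is that inside any product system $F=(F_n)$ dilating $E$ one has isometric inclusions $E_i\hookrightarrow F_i$ and $E_{e_i+e_j}\hookrightarrow F_{e_i+e_j}\cong F_i\otimes F_j\cong F_j\otimes F_i$, so the two inclusions of $E_{e_i+e_j}$ force the commutation unitary $u_{ij}\colon F_i\otimes F_j\to F_j\otimes F_i$ to act in a prescribed way on the image of $E_{e_i+e_j}$.

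The heart of the argument, and the step I expect to be the main obstacle, is the triple-compatibility computation. In any product system the three commutation unitaries $u_{12},u_{13},u_{23}$ must be mutually consistent: the two ways of reshuffling $F_1\otimes F_2\otimes F_3\cong F_{e_1+e_2+e_3}$ through the braid (hexagon) relation must agree, in particular on the copy of $E_{e_1+e_2+e_3}$ that sits inside. I would arrange the defining data of $E$ --- for instance via rotations or phases built into the gluing maps, which is where the two genuinely different examples enter --- so that the pairwise constraints derived in the previous step force the two sides of this compatibility identity to disagree on $E_{e_1+e_2+e_3}$. The delicate part is making this incompatibility truly unavoidable: one must rule out \emph{every} dilating product system $F$, not merely the naive candidates, and in particular control the freedom in the multiplicity spaces $F_i\supseteq E_i$. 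Here the finite-dimensionality coming from $\dim H=6$ is essential, since it reduces the obstruction to a concrete, and provably inconsistent, finite system of operator equations.

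Finally I would translate the obstruction back into the language of the theorem. A dilation acting on a type I factor $\sB(K)$ is an E-semigroup on $\sB(K)$, which by the standard correspondence is governed by a product system of Hilbert spaces dilating $E$; the non-existence established above therefore excludes any such dilation directly. For the case of a \emph{minimal} dilation on an arbitrary von Neumann algebra $\cB$, I would invoke the equivalent characterization of minimality (Lemma \ref{lem:min_equiv}) to reduce a hypothetical minimal dilation to the same embedding data, now over the relevant commutant, so that the identical incompatibility applies and no minimal dilation can exist on any $\cB$. Assembling these pieces yields Theorem \ref{thm:main}, with the explicit bound $\dim H=6$ read off from the chosen generating fibres.
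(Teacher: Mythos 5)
There is a genuine gap, and it sits exactly where you admit it does: the ``heart of the argument'' is never carried out. Your proposal is a framework (the subproduct-system / product-system dictionary of \cite{ShalitSolel}, which is indeed the right language) together with the assertion that one can ``arrange the defining data of $E$ --- for instance via rotations or phases built into the gluing maps'' so that no dilating product system exists. But exhibiting a subproduct system that (a) arises from \emph{unital} CP maps on a $6$-dimensional $H$ and (b) provably embeds into no product system is precisely the content of the theorem; asserting that the data can be so arranged is circular. Worse, there is a real mathematical reason to doubt that the construction can be pushed through ``directly'' in the unital setting: the known non-embeddable examples of \cite[Section 5.3]{ShalitSolel} are built from Parrott's commuting contractions and yield \emph{nonunital} CP maps, and the heuristic in Section 1.2 of this paper explains why that obstruction evaporates for unital maps --- unitality corresponds to coisometry, and commuting coisometries always have isometric dilations. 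Unitality is not a free parameter you can bolt on: it constrains the representation of the subproduct system on $H$ (the Kraus row operator of each fibre must be a coisometry), and your proposal gives no control of how this constraint interacts with the hexagon-type incompatibility you want to force. The paper avoids this difficulty entirely by a different route: it takes the known nonunital counterexample of \cite[Theorem 5.14]{ShalitSolel} (acting on $\sB(H)$, $\dim H=5$) and applies a unitalization (Proposition \ref{prop:unitalization_B(H)}), proving (Theorem \ref{thm:unitalization_dil}) that any dilation of the unitalization $\widetilde{T}$ compresses to a \emph{strong} dilation of the original $T$ on a corner; this is where $\dim H = 6$ comes from, whereas in your proposal the bound $\dim H=6$ is ``read off'' from data that was never specified.

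Your reduction of the minimal-dilation statement is also not right as stated. You propose to rerun the embedding obstruction ``over the relevant commutant'' for a minimal dilation on an arbitrary von Neumann algebra $\cB$; for general $\cB$ the dilation is governed by a product system of Hilbert \emph{modules} (or correspondences) over $\cB'$, not of Hilbert spaces, so your Hilbert-space obstruction does not transfer in the way you suggest. The correct (and much simpler) reduction is the paper's Lemma \ref{lem:minB(K)}: because the dilated algebra satisfies $p\cB p = \sB(H)$, minimality forces $\cB' = \mb{C}\cdot I_K$, hence $\cB = \sB(K)$ --- a type I factor --- so the minimal case collapses to the type I case you already excluded. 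If you repair your argument by inserting this lemma, the last step of your plan becomes correct; but the central construction remains missing, and without it there is no proof.
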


\subsection{Why is our main result interesting?}
The reader who is familiar with the theory of isometric dilations of contractions on a Hilbert space 
\cite{SzNF70}, and with Parrot's example of three commuting contractions that have no isometric dilation \cite{Parrot}, might think that Theorem \ref{thm:main} is obvious, or at least that it should have been expected. Let us explain the element of surprise in our result. The reader should be aware that the following paragraphs are of a heuristic nature. 

The prototypical example of a normal, completely positive map $T$ on $\sB(H)$ is of the form
\be\label{eq:analog}
T(a) = tat^* \,\, , \,\, a \in \sB(H)
\ee
where $t$ is a contraction. Some properties of $t$ are reflected faithfully by $T$: $T$ is a $*$-endomorphism if and only if $t$ is an isometry; $T$ is unital if and only if $t$ is a coisometry; $\|T\| = \|t\|^2$, so in particular $T$ is contractive if and only if $t$ is. Also, if $\{t_s\}_{s\in\mb{S}}$ is a semigroup of contractions, then $T_s (\bullet) = t_s \bullet t_s^*$ is a CP-semigroup. In particular, if $t_1, \ldots, t_k$ is a commuting $k$-tuple of contractions then the corresponding $T_1, \ldots, T_k$ are a commuting $k$-tuple of CP maps.

Let $t = \{t_s\}_{s\in\mb{S}}$ be a semigroup of contractions on $H$. Recall that a minimal isometric dilation for $t$ is a semigroup of isometries $v = \{v_s\}_{s \in \mb{S}}$ on a Hilbert space $K \supseteq H$ such that
\bes
t_s = p_H v_s\big|_H \,\, , \,\, s \in \mb{S},
\ees
and
\bes
K = \overline{\spn}\{v_s h : h \in H, s \in \mb{S}\} .
\ees
It follows that
\bes
v_s ^* H \subseteq H  \,\, , \,\, s \in \mb{S}.
\ees
One can check that if $t = \{t_s\}_{s\in\mb{S}}$ is a semigroup of contractions on $H$ and $T_s(\bullet) = t_s \bullet t_s^*$ is the corresponding CP-semigroup, then a minimal dilation $v = \{v_s\}_{s \in \mb{S}}$ (acting on $K$) gives rise to a strong dilation $\theta$ for $T$ by 
\bes
\theta_s(b) = v_s b v_s^* \,\, , \,\, b \in \sB(K).
\ees
There is also some kind of a converse -- see Section 5 in \cite{ShalitSolel}.

The development of the dilation theory of CP-semigroups followed the line of development of the well known (see \cite{SzNF70}) dilation theory of contractions. Every one-parameter semigroup of contractions (continuous or discrete) has an isometric dilation -- and every one-parameter CP-semigroup has a strong dilation \cite{Bhat96,SeLegue,BS00,MS02,Arv03}. Every pair of commuting contractions has an isometric dilation -- and every pair of commuting CP maps has a strong dilation \cite{Bhat98,Solel06}. There are some partial results regarding strong dilations for two-parameter CP-semigroups \cite{Shalit08,Shalit09}, corresponding to the existence of isometric dilations for two-parameter semigroups of contractions \cite{Slocinski}. There exist three-tuples of commuting contractions that have no isometric dilations -- and it was shown that there exist three-tuples of commuting CP maps with no strong dilation \cite{ShalitSolel}. One gets the impression that the theories are synchronized. It might be hoped that (\ref{eq:analog}) could serve as a guide for translating dilation theorems for contractive semigroups to dilation theorems for CP-semigroups. 

However, there do exist simple sufficient conditions that ensure that a $k$-tuple of commuting contractions has an isometric dilation. For example, 

\vskip 0.2cm

\noindent{\bf Theorem.} (See \cite[Theorem I.9.2]{SzNF70}) {\em
Every $k$-tuple of commuting coisometries has an isometric dilation. }

\vskip 0.2cm

In (\ref{eq:analog}) coisometric operators correspond to unital maps. This is the reason, taking into account all that the two dilation theories have in common, that it was natural to conjecture that every $k$-tuple of commuting unital CP maps has a (strong) dilation. However, our main result is that there exist a three-tuple of commuting unital CP maps with no dilation, and this shows that the dilation theory of CP-semigroups has some subtleties which do not occur in the classical theory.

One might think that the additional subtleties in the dilation theory of CP-semigroups follows simply from the fact that a general CP map on $\sB(H)$ has a form 
\bes
T(a) = \sum_{i=1}^\infty t_i a t_i^* ,
\ees
where $t_1, t_2, \ldots \in \sB(H)$ satisfies $\sum_{i=1}^\infty t_i t_i^* \leq I_H$, rather than the simple form (\ref{eq:analog}). But it is not this multiplicity alone that accounts for the complications. Indeed, let $R,S,T$ be three unital CP maps given by 
\bes
R(a) = \sum_{i=1}^\infty r_i a r_i^* \,\, , \,\, S(a) = \sum_{i=1}^\infty s_i a s_i^* \,\,,\,\, T(a) = \sum_{i=1}^\infty t_i a t_i^* ,
\ees 
for all $a \in \sB(H)$, where $r_1, r_2, \ldots$, $s_1, s_2, \ldots$ and $t_1, t_2, \ldots$ are sequences of operators in $\sB(H)$ satisfying
\be\label{eq:unital}
\sum_{i=1}^\infty r_i  r_i^*  = \sum_{i=1}^\infty s_i  s_i^* =  \sum_{i=1}^\infty t_i  t_i^* = I_H .
\ee
Equation (\ref{eq:unital}) is equivalent to $R$, $S$ and $T$ being unital maps. Assume that for all $i,j$, 
\be\label{eq:commute_op}
r_i s_j = s_j r_i \,\, , \,\, s_i t_j = t_j s_i  \,\, \textrm{ and } \,\, t_i r_j = r_j t_i .
\ee
This implies $R$, $S$ and $T$ commute. One can show, using the methods of \cite{ShalitSolel} (especially \cite[Corollary 5.10]{ShalitSolel}), that $R$, $S$ and $T$ can be dilated to a three-tuple of commuting, unital $*$-endomorphisms on some $\sB(K)$. 
This illustrates that the complications in the dilation theory of CP maps do not follow merely from the multiplicity that CP maps might have. The complications arise from the fact that, in general, the relations between the operators  $r_1, r_2, \ldots$, $s_1, s_2, \ldots$ and $t_1, t_2, \ldots$ giving rise to three commuting CP maps is by far more complicated than (\ref{eq:commute_op}).

\section{Proof of Theorem \ref{thm:main}}

The strategy of the proof of Theorem \ref{thm:main} is roughly as follows. We know from \cite[Section 5.3]{ShalitSolel} that there are examples of nonunital CP-semigroups over $\mb{N}^3$ that have no strong dilation (acting on a type I factor). To prove the existence of a Markov semigroup over $\mb{N}^3$ that has no dilation, we will show that every (nonunital) CP-semigroup $T$ has a unitalization $\widetilde{T}$, such that every dilation of $\widetilde{T}$ gives rise to a strong dilation of $T$. The unitalization of any CP-semigroup that has no strong dilation will therefore be a Markov semigroup that has no dilation.

\begin{proposition}\label{prop:unitalization_B(H)}
Let $T = \{T_s\}_{s\in\mb{S}}$ be a CP-semigroup acting on $\sB(H)$. Then the family $\widetilde{T} = \{\widetilde{T}_s\}_{s \in \mb{S}}$ of linear operators acting on $\sB(H \oplus \mb{C})$ given by $\widetilde{T}_0 = {\bf id}$ and
\bes
\widetilde{T}_s 
\left( 
\begin{array}
[c]{ccc}%
A & h \\
g^* & c
\end{array}
\right) = \left( 
\begin{array}
[c]{ccc}%
T_s(A) + c(I - T_s(I)) & 0 \\
0 & c
\end{array}
\right) \quad, \quad s \neq 0,
\ees
is a semigroup of unital CP-maps such that for all $b \in \sB(H)$ and $s \in \mb{S}$
\be\label{eq:CP0dilCP}
T_s(b) = p_H\widetilde{T}_s(b)p_H.
\ee
\end{proposition}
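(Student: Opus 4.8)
The plan is to verify four things in turn: the intertwining relation \eqref{eq:CP0dilCP}, unitality, complete positivity (with normality), and the semigroup law. Throughout I identify $\sB(H \oplus \mb{C})$ with $2\times 2$ block matrices as in the statement, write $V\colon H \to H\oplus\mb{C}$ and $W\colon \mb{C}\to H\oplus\mb{C}$ for the canonical isometric inclusions (so $p_H = VV^*$, while $V^*XV$ and $W^*XW$ extract the $(1,1)$- and $(2,2)$-corners of $X$), and abbreviate $D_s := I - T_s(I)$.

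The relation \eqref{eq:CP0dilCP} and unitality are immediate. For $b\in\sB(H)$ I regard $b$ as $VbV^*$, which has $c=0$, so $\widetilde{T}_s(VbV^*) = V T_s(b) V^*$ and compression by $p_H$ returns $T_s(b)$ (the case $s=0$ being trivial since $\widetilde{T}_0 = \id$). Unitality follows by inserting $A = I,\ c = 1$: the $(1,1)$-entry becomes $T_s(I) + (I - T_s(I)) = I$ and the $(2,2)$-entry is $1$, so $\widetilde{T}_s$ fixes $I_{H\oplus\mb{C}}$.

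The central point is complete positivity. Since each $T_s$ is CP and contractive, $0 \le T_s(I) \le I$, hence $D_s \ge 0$. I would then split $\widetilde{T}_s = \Phi_s + \Psi_s$ into two manifestly CP pieces. The first,
\[
\Phi_s(X) = V\,T_s(V^* X V)\,V^*,
\]
yields the block $\left(\begin{smallmatrix} T_s(A) & 0 \\ 0 & 0\end{smallmatrix}\right)$ and is CP as the composition of the compression $X\mapsto V^*XV$, the CP map $T_s$, and the ampliation $Y\mapsto VYV^*$. The second,
\[
\Psi_s(X) = (W^* X W)\,M_s, \qquad M_s = \begin{pmatrix} D_s & 0 \\ 0 & 1 \end{pmatrix}\ge 0,
\]
yields the block $\left(\begin{smallmatrix} cD_s & 0 \\ 0 & c\end{smallmatrix}\right)$; it is CP because $X\mapsto W^*XW$ is a scalar-valued compression and the map $\mb{C}\to\sB(H\oplus\mb{C})$, $c\mapsto cM_s$, is CP exactly because $M_s\ge 0$ (a linear map out of $\mb{C}$ is CP iff it sends $1$ to a positive element). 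Adding the corners reproduces $\widetilde{T}_s$, and normality is inherited from that of $T_s$ and of the corner maps. For the semigroup law it suffices to treat $s,t\neq 0$ (cases involving $0$ are trivial). As $\widetilde{T}_t(X)$ is block-diagonal with $(1,1)$-entry $A' = T_t(A) + cD_t$ and $(2,2)$-entry $c$, applying $\widetilde{T}_s$ and using $T_sT_t = T_{s+t}$ with linearity gives $(1,1)$-entry
\[
T_s(A') + cD_s = T_{s+t}(A) + c\bigl(T_s(I) - T_{s+t}(I)\bigr) + c\bigl(I - T_s(I)\bigr) = T_{s+t}(A) + cD_{s+t},
\]
with the $(2,2)$-entry unchanged, which is precisely $\widetilde{T}_{s+t}(X)$.

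The only step demanding genuine care is complete positivity, and the whole argument hinges on the single observation that contractivity of $T_s$ forces $D_s = I - T_s(I)\ge 0$: this positivity is what makes the defect block $M_s$ positive and hence $\Psi_s$ a legitimate CP map. Everything else is block bookkeeping and a one-line use of the semigroup property of $T$.
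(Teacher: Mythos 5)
Your proposal is correct and complete: the split $\widetilde{T}_s = \Phi_s + \Psi_s$, with the key observation that contractivity of $T_s$ forces $D_s = I - T_s(I) \geq 0$, cleanly establishes complete positivity, and the unitality, compression, and semigroup computations are all right. The paper's own proof consists of the single phrase ``straightforward verification,'' so your argument is precisely the verification the authors leave to the reader, organized in the natural way.
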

\begin{proof} 
Straightforward verification.
\end{proof} 
The reader may want to compare this unitalization procedure to the one used in \cite[Section 8]{BS00}.
The advantages of this unitalization are: 1) $\sB(H)$ is a \emph{full} corner of the algebra on which $\widetilde{T}$ acts, and 2) $\widetilde{T}$ acts on a type I factor. The disadvantage is that $\widetilde{T}$ is not continuous at $s = 0$, even when $T$ is, so that this unitalization will not be useful for continuous semigroups.

\begin{theorem}\label{thm:unitalization_dil}
Let $T$ be a CP-semigroup acting on $\sB(H)$, and let $\widetilde{T}$ be the unitalization given by Proposition \ref{prop:unitalization_B(H)}. If $\widetilde{T}$ has a dilation acting on $\widetilde{\cB}$, then $T$ has a strong dilation acting on a corner of $\widetilde{\cB}$.
\end{theorem}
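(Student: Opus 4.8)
The plan is to start from a given dilation $(\tilde p, K, \widetilde{\cB}, \widetilde\theta)$ of $\widetilde T$ and to carve out of $\widetilde{\cB}$ a corner on which a restriction of $\widetilde\theta$ \emph{strongly} dilates $T$. Two structural facts come for free. Since $\widetilde T$ is unital, the Remark on strong dilations shows that the dilation is automatically strong, so
\be
\widetilde T_s(\tilde p\, B\, \tilde p) = \tilde p\,\widetilde\theta_s(B)\,\tilde p \qquad (B \in \widetilde{\cB},\ s \in \mb{S}).
\ee
Write $q := p_H \le \tilde p$ for the projection of $K$ onto $H \subseteq H\oplus\mb{C} = \tilde p K$, let $\omega$ be the unit vector spanning the $\mb{C}$-summand and $p_\omega := \tilde p - q$ its rank-one projection. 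Since $\sB(H) = q\,\widetilde{\cB}\,q$, the algebra carrying $T$ already sits inside $\widetilde{\cB}$ as a corner.

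The key computation is to evaluate $q\,\widetilde\theta_s(B)\,q$ for $B \in \widetilde{\cB}$. Feeding $\tilde p B\tilde p$ into the formula of Proposition~\ref{prop:unitalization_B(H)} and reading off the $H$-block of the strong-dilation identity above gives
\be
q\,\widetilde\theta_s(B)\,q = T_s(qBq) + \langle \omega, B\omega\rangle\,\big(I - T_s(I)\big) \qquad (B \in \widetilde{\cB}).
\ee
Thus $\widetilde\theta$ compressed to $q$ already reproduces $T$ up to the ``defect term'' $\langle\omega, B\omega\rangle(I - T_s(I))$ coming from the adjoined $\mb{C}$-direction; the entire problem is to pass to a corner on which this term vanishes while $\widetilde\theta$ still restricts to an E-semigroup.

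To this end I would set $Q := \bigvee_{s\in\mb{S}}\widetilde\theta_s(q)$ and take $\cB := Q\,\widetilde{\cB}\,Q$, $\theta_s := \widetilde\theta_s|_{\cB}$. Three properties need checking. First, $q \le Q$ (the term $s=0$), so that $\sB(H) = q\,\widetilde{\cB}\,q = q\,\cB\,q$ is a corner of $\cB$. Second, $\widetilde\theta_s(Q) \le Q$: by normality $\widetilde\theta_s(Q) = \bigvee_t \widetilde\theta_{s+t}(q) = \bigvee_{u\in s+\mb{S}}\widetilde\theta_u(q) \le Q$, which makes each $\theta_s$ a $*$-endomorphism of $\cB$ and $\theta$ an E-semigroup on $\cB$. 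Third, $Q\,p_\omega = 0$: indeed $p_\omega \le \widetilde\theta_s(p_\omega)$ (proved exactly as in the Remark, since the $(2,2)$-entry of $\widetilde T_s$ is the identity), while $\widetilde\theta_s(p_\omega) \perp \widetilde\theta_s(q)$ because $p_\omega q = 0$; hence $p_\omega \perp \widetilde\theta_s(q)$ for every $s$, so $p_\omega \perp Q$. This last property forces $Q\omega = 0$, so that for $B \in \cB$ one has $\langle\omega, B\omega\rangle = \langle \omega, QBQ\,\omega\rangle = 0$ and the identity collapses to $q\,\theta_s(B)\,q = T_s(qBq)$. Therefore $(q, QK, \cB, \theta)$ is a strong dilation of $T$ on the corner $\cB$ of $\widetilde{\cB}$.

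The main obstacle is the simultaneous balancing built into the choice of $Q$: the corner projection must be large enough to be \emph{coinvariant} ($\widetilde\theta_s(Q)\le Q$, needed for $\theta$ to be multiplicative) yet still \emph{orthogonal to the adjoined unit direction} $p_\omega$ (needed to kill the defect term and obtain strongness). That one projection can do both is not clear a priori, and the point that makes it work is the orthogonality $p_\omega \perp \widetilde\theta_s(q)$, which itself rests on the increase $p_\omega \le \widetilde\theta_s(p_\omega)$ special to the unitalization. In writing this out I would take care that none of the steps requires $\widetilde\theta$ to be unital and that the supremum manipulations use only normality of the $\widetilde\theta_s$, so that the argument holds over an arbitrary commutative semigroup $\mb{S}$.
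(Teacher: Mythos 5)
Your proof is correct, but it carves out a different corner of $\widetilde{\cB}$ than the paper does, and the verification runs along different lines. The paper works with the complement of the adjoined direction: writing $q_\omega = 0 \oplus 1_{\mb{C}}$ (your $p_\omega$) and $1 = 1_{\widetilde{\cB}}$, it shows exactly as you do that $\tilde\theta_s(q_\omega) \geq q_\omega$, deduces $\tilde\theta_s(1-q_\omega) \leq 1-q_\omega$, and takes the corner $\cB = (1-q_\omega)\widetilde{\cB}(1-q_\omega)$ --- no supremum construction at all. Strongness is then obtained not from a defect-term formula but from a coinvariance argument: $\tilde p$ is coinvariant for $\tilde\theta$ (by strongness of the $\widetilde{T}$-dilation, $\tilde p\,\tilde\theta_s(1-\tilde p)\,\tilde p = \widetilde{T}_s(\tilde p(1-\tilde p)\tilde p) = 0$), hence $p_H$ is coinvariant for the restricted semigroup, which gives $p\,\theta_s(b)\,p = p\,\theta_s(pbp)\,p = p\,\widetilde{T}_s(pbp)\,p = T_s(pbp)$ directly from equation (2.1) of Proposition 2.1. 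Your route instead takes the smallest natural invariant corner, $Q = \bigvee_{s}\tilde\theta_s(p_H)$, and kills the defect term via the orthogonality $Q \perp p_\omega$. What each buys: the paper's choice is more economical, since it avoids the fact (standard, but worth a line in a write-up) that normal $*$-endomorphisms preserve arbitrary suprema of projections --- your step $\tilde\theta_s(Q) = \bigvee_t \tilde\theta_{s+t}(q)$ needs the homomorphism property (preservation of support projections, hence of finite suprema) in addition to normality, which handles the increasing net of finite suprema. Your choice is sharper: since $p_\omega \perp \tilde\theta_s(p_H)$ for all $s$, one has $Q \leq 1 - p_\omega$, so your corner sits inside the paper's; you localize the strong dilation on the subalgebra generated by the $\tilde\theta$-orbit of $p_H$, closer in spirit to a minimal dilation, and your compression formula $q\,\tilde\theta_s(B)\,q = T_s(qBq) + \langle\omega, B\omega\rangle(I - T_s(I))$ isolates exactly the obstruction that any such argument must remove. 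Both proofs rest on the same two pillars: automatic strongness of any dilation of the unital $\widetilde{T}$, and the increase $\tilde\theta_s(p_\omega) \geq p_\omega$ of the adjoined direction.
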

\begin{proof}
Denote by $(\tilde{p},\widetilde{K},\widetilde{\cB},\tilde{\theta})$ the strong dilation of $\widetilde{T}$. Since $\sB(H \oplus \mb{C})$ is embedded as $\widetilde{p}\widetilde{\cB}\widetilde{p}$ in $\widetilde{\cB}$, we have the identification $\widetilde{p} = 1_{\sB(H \oplus \mb{C})}$. Write $1$ for $1_{\widetilde{\cB}}$. We define a strong dilation $(p,K,\cB,\theta)$ for $T$ as follows. First, in $\sB(H \oplus \mb{C}) \subseteq \widetilde{\cB}$, define $p = I_H \oplus 0$ and $q = 0 \oplus 1_\mb{C}$. The Hilbert space $K$ is then defined as $K = (1-q) \widetilde{K}$. Next, define 
\bes
\cB = (1-q)\widetilde{\cB} (1-q).
\ees
Writing $1_\cB$ for the identity of $\cB$, we have that $1_\cB = 1-q$ and $\cB = 1_\cB \widetilde{\cB} 1_\cB$. We claim that $\cB$ is invariant for $\tilde{\theta}$. To see this, note first that $\tilde{\theta}_s(1_\cB) \leq 1_\cB$. Indeed, $q \tilde{\theta}_s(q) q = q \tilde{p} \tilde{\theta}_s(q) \tilde{p} q = q \widetilde{T}_s(q) q = q$, thus $\tilde{\theta}_s(q) \geq q$, whence
\bes
\tilde{\theta}_s(1_{\cB}) = \tilde{\theta}_s(1-q) = \tilde{\theta}(1) - \tilde{\theta}_s(q) \leq 1 - q = 1_{\cB}.
\ees
It follows that for every $b = 1_{\cB} b 1_{\cB} \in \cB$, 
\bes
\tilde{\theta}_s(b) = \tilde{\theta}_s(1_{\cB} b 1_{\cB}) = \tilde{\theta}_s(1_{\cB}) \tilde{\theta}_s(b) \tilde{\theta}_s(1_{\cB}) = 1_\cB \tilde{\theta}_s(1_{\cB}) \tilde{\theta}_s(b) \tilde{\theta}_s(1_{\cB})1_\cB \in \cB.
\ees
That proves the claim that $\cB$ is invariant for $\tilde{\theta}$. 

Now we may define an E-semigroup $\theta$ acting on $\cB$ by $\theta_s = \tilde{\theta}_s\big|_{\cB}$. It remains to show that $(p,K,\cB,\theta)$ is a strong dilation of $T$. To that end, we first show that $p \in \cB$ is coinvariant for $\theta$, that is, that for all $s \in \mb{S}$, 
\be\label{eq:coinvariant}
\theta_s(1_\cB - p) \leq 1_\cB - p. 
\ee 
But
\begin{align*}
p \theta_s(1_\cB - p)p 
&= p \tilde{p} \tilde{\theta}_s(1 - q - p)\tilde{p} p \\
&= p \tilde{p} \tilde{\theta}_s(1 - \tilde{p})\tilde{p} p \\
&= 0 , 
\end{align*}
because $\tilde{p}$ is coinvariant for $\tilde{\theta}$ (that is, $\tilde{\theta}_s(1-\tilde{p}) \leq 1 - \tilde{p}$ for all $s \in \mb{S}$. This follows directly from the dilation condition: $\tilde{p} \tilde{\theta}_s (1-\tilde{p}) \tilde{p} = \widetilde{T}_s(\tilde{p}(1-\tilde{p})\tilde{p}) = 0$). This establishes (\ref{eq:coinvariant}). Now we can show that $\theta$ is a strong dilation for $T$. First note that (\ref{eq:coinvariant}) implies that for every $b \in \cB$,
\bes
p \theta_s(b) p - p \theta_s(pb)p = p\theta_s(1_\cB - p) \theta_s(b) p = 0,
\ees
thus $p \theta_s(b) p = p \theta_s(pb)p$. Taking adjoints and using $pb$ instead of $b$ we get
\bes
p \theta_s(b) p = p \theta_s(pbp)p.
\ees
Therefore for all $b \in \cB$
\begin{align*}
p \theta_s(b) p 
&= p \theta_s(pbp)p \\
&= p\tilde{p} \tilde{\theta}_s(pbp)\tilde{p}p \\
&= p \widetilde{T}_s(pbp) p \\
&= T_s(pbp).
\end{align*}
\end{proof}

Before completing the proof of Theorem \ref{thm:main} we need to make the following preparations. The following lemma is stated in greater generality than we need, because it is interesting.
\begin{lemma}\label{lem:min_equiv}
Let $T$ be a CP-semigroup on $\cA$, and let $(p,K,\cB,\theta)$ be a dilation. Then $(p,K,\cB,\theta)$ is minimal if and only if 
\begin{enumerate}
\item $\cB$ is the W$^*$-algebra generated by $\cup_{s\in\mb{S}}\theta_s(\cA)$; and
\item $K = \overline{\spn}\{\theta_{s_1}(a_1) \cdots \theta_{s_k}(a_k) h : h \in H, a_i \in \cA_i, s_i \in \mb{S}, 1\leq i\leq k\}.$
\end{enumerate}
\end{lemma}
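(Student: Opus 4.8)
The plan is to notice first that condition (1) of the lemma is word-for-word the first requirement in the definition of minimality, so the entire content of the lemma is the equivalence, \emph{under the standing assumption that} (1) \emph{holds}, of the two conditions ``the central carrier of $p$ equals $1_\cB$'' and (2). I would therefore reduce everything to proving, once $\cB$ is the $W^*$-algebra generated by $\cup_{s\in\mb{S}}\theta_s(\cA)$, that the central carrier $c(p)$ equals $1_\cB$ if and only if the vectors $\theta_{s_1}(a_1)\cdots\theta_{s_k}(a_k)h$ have dense linear span in $K$.

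Next I would invoke the standard description of the central carrier: for a projection $p$ in a von Neumann algebra $\cB\subseteq\sB(K)$, $c(p)$ is the orthogonal projection onto the closed linear span of $\cB pK=\{b\xi:b\in\cB,\ \xi\in pK\}$. (That subspace is invariant under both $\cB$ and $\cB'$, so its projection is central, dominates $p$, and is the least central projection to do so; I would either cite this or reprove it in two lines.) Since $pK=H$, this yields that $c(p)=1_\cB$ if and only if the closed linear span of $\cB H$ is all of $K$.

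The heart of the matter is then to identify this closed linear span of $\cB H$ with the subspace $N_0:=\overline{\spn}\{\theta_{s_1}(a_1)\cdots\theta_{s_k}(a_k)h\}$ occurring in (2). One inclusion is free, since each such product lies in $\cB$. For the reverse I would show that all of $\cB$ leaves $N_0$ invariant. Writing $\mathfrak{A}$ for the $*$-algebra generated by $\cup_{s}\theta_s(\cA)$ --- which, because $\theta_s(a)^*=\theta_s(a^*)$, is simply the linear span of the products above --- the very definition of $N_0$ gives $\mathfrak{A}N_0\subseteq N_0$. The key point is that the set of operators leaving the closed subspace $N_0$ invariant, namely $\{b:(1-P)bP=0\}$ with $P$ the projection onto $N_0$, is WOT-closed; since $\cB$ is the WOT-closure of $\mathfrak{A}$ by (1), we get $\cB N_0\subseteq N_0$. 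Finally $H\subseteq N_0$, because taking $k=1$, $s_1=0$ and $a_1$ the unit $p$ of $\cA$ gives $\theta_0(p)h=ph=h$; hence $\cB H\subseteq \cB N_0\subseteq N_0$, and the two closed spans coincide.

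Stitching the pieces together, $c(p)=1_\cB$ if and only if the closed span of $\cB H$ is $K$, if and only if $N_0=K$, which is precisely (2); together with the identification of (1), this is the asserted equivalence. The one step demanding care --- and the place I would expect the only real difficulty --- is the passage from invariance of $N_0$ under the generating $*$-algebra to invariance under its weak closure $\cB$; this is exactly where WOT-closedness of the stabilizer of a subspace enters and where hypothesis (1) is genuinely used.
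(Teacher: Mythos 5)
Your proposal is correct and takes essentially the same route as the paper, whose entire proof is a one-line citation of Kadison--Ringrose Proposition 5.5.2 (the central carrier of $p$ has range $[\cB H]$); your argument simply fills in that citation together with the implicit identification $[\cB H]=\overline{\spn}\{\theta_{s_1}(a_1)\cdots\theta_{s_k}(a_k)h\}$. One small refinement: if ``W$^*$-algebra generated'' is read as the double commutant of $\cup_{s}\theta_s(\cA)$, justify $\cB N_0\subseteq N_0$ by noting that the projection onto $N_0$ lies in $\mathfrak{A}'$ (since $\mathfrak{A}$ is a $*$-algebra leaving $N_0$ invariant) and hence commutes with $\cB=\mathfrak{A}''$, rather than by WOT-density of $\mathfrak{A}$ in $\cB$, which requires nondegeneracy.
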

\begin{proof}
This follows immediately from the fact that the central carrier of $p$ has range $[\cB H]$ (see \cite[Proposition 5.5.2]{KR}).
\end{proof}

\begin{lemma}\label{lem:minB(K)}
Let $T = \{T_s\}_{s\in\mb{S}}$ be a CP-semigroup on $\sB(H)$, and let $(p,K,\cB,\theta)$ be a minimal dilation for $T$. Then $\cB = \sB(K)$.
\end{lemma}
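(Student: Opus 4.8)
The plan is to prove the equivalent statement that the commutant $\cB' \subseteq \sB(K)$ is trivial, i.e. $\cB' = \mb{C}1_K$, which gives $\cB = \sB(K)$. Fix $x \in \cB'$. Since $p \in \cB$ (it is part of the data of a dilation), $x$ commutes with $p$; in particular $x$ leaves $H = pK$ invariant, and one checks from $xp = px$ that $pxp = px = xp$. The first goal is to show that the compression of $x$ to $H$ is a scalar multiple of the identity.

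To see this, I would exploit the fact that the corner $\cA = p\cB p = \sB(H)$ is a type I \emph{factor}, hence has trivial commutant in $\sB(H)$. A generic element of $p\cB p = \sB(H)$ has the form $pap$ with $a \in \cB$, and using $xp = px$ together with $xa = ax$ one computes, as operators on $H$,
\begin{align*}
(pxp)(pap) = pxap = paxp = (pap)(pxp).
\end{align*}
Thus $pxp\big|_H$ commutes with every element of $\sB(H)$, and therefore $pxp\big|_H = \lambda 1_H$ for some scalar $\lambda$ depending on $x$. Because $x$ preserves $H$, this is the statement that $xh = \lambda h$ for every $h \in H$.

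Finally, I would propagate this scalar from $H$ to all of $K$ using minimality. By Lemma \ref{lem:min_equiv}(2), $K$ is the closed span of vectors of the form $\theta_{s_1}(a_1) \cdots \theta_{s_k}(a_k) h$ with $h \in H$, $a_i \in \cA$ and $s_i \in \mb{S}$. Each factor $\theta_{s_i}(a_i)$ lies in $\cB$, hence commutes with $x$; moving $x$ to the right past all of them and then applying $xh = \lambda h$ yields $x\xi = \lambda\xi$ on this total set, whence $x = \lambda 1_K$. Therefore $\cB' = \mb{C}1_K$ and $\cB = \sB(K)$.

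The two ingredients that do the real work are (i) that the corner $p\cB p$ is the \emph{full} factor $\sB(H)$, which is what forces the compression of the commutant to be scalar, and (ii) the second minimality condition of Lemma \ref{lem:min_equiv}, which is precisely what is needed to spread the scalar across $K$. I expect the only genuinely delicate point to be the commutation computation in the second paragraph, where one must keep careful track of which identities hold as operators on all of $K$ versus only after compression to $H$ (using repeatedly that $pxp = px = xp$); the remainder is formal.
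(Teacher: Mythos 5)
Your proof is correct and follows essentially the same route as the paper's: both reduce to showing $\cB' = \mb{C}1_K$ by compressing a commutant element to $H$, using that $p\cB p = \sB(H)$ forces that compression to be scalar, then spreading the scalar over all of $K$ via minimality condition (2) of Lemma \ref{lem:min_equiv}, and concluding $\cB = \cB'' = \sB(K)$. The only cosmetic difference is that you work with an arbitrary $x \in \cB'$ and a scalar $\lambda$, whereas the paper works with a projection $q \in \cB'$ and the dichotomy $qp = 0$ or $qp = p$ (your variant in fact avoids the paper's implicit assumption that $q$ is a projection).
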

\begin{proof}
The proof is standard (see \cite[Proposition 6.8]{Shalit08} or \cite[pp. 292--293]{Arv03}) and is included for completeness.

By Lemma \ref{lem:min_equiv} we have
\bes
K  = \overline{\spn}\{\theta_{s_1}(a_1) \cdots \theta_{s_k}(a_k) h : h \in H, a_i \in \sB(H), s_i \in \mb{S}, 1\leq i\leq k\}.
\ees
Now let $q \in \cB'$. Then $qp = pq = pqp$ is a projection in $\sB(H)$ that commutes with $\sB(H)$, thus either $qp = 0$ or $qp = p = I_H$. If $qp = 0$, then 
\bes
q \theta_{s_1}(a_1) \cdots \theta_{s_k}(a_k) h = \theta_{s_1}(a_1) \cdots \theta_{s_k}(a_k) qp h =0,
\ees
so $q = 0$. If $qp = p$ then 
\bes
q \theta_{s_1}(a_1) \cdots \theta_{s_k}(a_k) h = \theta_{s_1}(a_1) \cdots \theta_{s_k}(a_k) qp h = \theta_{s_1}(a_1) \cdots \theta_{s_k}(a_k) h ,
\ees
so $q = I_K$. We conclude that $\cB' = \mb{C}\cdot I_K$, whence $\cB = \cB'' = \sB(K)$.
\end{proof}

\noindent{\em Completion of the proof of Theorem \ref{thm:main}}.
Let $T$ be a CP-semigroup over $\mb{N}^3$ acting on $\sB(H)$ that has no strong dilation acting on a type I factor. Two such examples were constructed in \cite[Section 5.3]{ShalitSolel}. The example in \cite[Theorem 5.14]{ShalitSolel} acts on a $\sB(H)$ with $\dim H = 5$. Let $\widetilde{T}$ be the unitalization of $T$ given by Proposition \ref{prop:unitalization_B(H)} (so $\widetilde{T}$ acts on $\sB(\widetilde{H})$ with $\dim \widetilde{H} = 6$). We claim that $\widetilde{T}$ has no minimal dilation, and in fact no dilation on a type I factor whatsoever. 

We first show that $\widetilde{T}$ has no dilation acting on a type I factor. Assume to the contrary that $\widetilde{T}$ has a dilation $(\tilde{p},\widetilde{K},\sB(\widetilde{K}),\tilde{\theta})$. By Theorem \ref{thm:unitalization_dil}, this implies that $T$ has a strong dilation acting on some corner of $\sB(\widetilde{K})$ - therefore a type I factor. This contradicts the choice of $T$.

Finally, to see that $\widetilde{T}$ has no {\em minimal} dilation, we use Lemma \ref{lem:minB(K)}, which tells us that if $(\tilde{p},\widetilde{K},\widetilde{\cB},\tilde{\theta})$ is a \emph{minimal} dilation of $\widetilde{T}$, then $\widetilde{\cB} = \sB(\widetilde{K})$. By the previous paragraph, this is impossible.

\section{Open problems}

There are several interesting questions that are left open:
\begin{enumerate}
\item The counter example we gave acts on $\sB(H)$ with $\dim H =6$. {\em What is the smallest dimension of $H$ such for which such a counter example can be found?}

\item We did not exclude the possibility that every three-tuple of commuting, unital CP maps has a strong dilation, if one also allows for non-minimal dilations and arbitrary dilating von Neumann algebras. {\em Is there a three tuple of CP maps that has no dilation whatsoever?}

\item {\em What conditions guarantee the existence of a (strong) dilation for a Markov or a CP-semigroup?}
\end{enumerate}
A better understanding of these problems will involve {\em subproduct systems}. We plan to treat some of these problems in the future.

\subsection*{Acknowledgments.} The authors thank the anonymous referee for his remarks. This research was done partially while the second author was visiting the first author at the Department of Pure Mathematics in the University of Waterloo in November 2009, and while the first author was visiting the second one at the Department of Mathematics and Statistics in Queen's University (Kingston) in January 2010. The generous and warm hospitality provided by the departments and by the hosts Ken Davidson and Roland Speicher is greatly appreciated. The second author is supported by research funds of the Italian MIUR (under PRIN 2007) and of the University of Molise. This includes also partial support of the visit in January.

\bibliographystyle{amsplain}

\begin{thebibliography}{9}
%
%


\bibitem{Arv03}Wm.B. Arveson, 
\emph{Non commutative dynamics and E-semigroups}, 
Springer Monographs in Math., Springer-Verlag, 2003.

\bibitem{Bhat96} B. V. R. Bhat, 
\emph{An index theory for quantum dynamical semigroups}, 
Trans. Amer. Math. Soc. \textbf{348} (1996), no. 2, 561--583.

\bibitem {Bhat98} B. V. R. Bhat, \emph{A generalized intertwining lifting theorem}, in: Operator Algebras and Applications, II, Waterloo, ON, 1994-1995, in: Fields Inst. Commun. \textbf{20}, Amer. Math. Soc., Providence, RI, 1998, pp. 1--10.

\bibitem{Bha99}
B.V.R. Bhat,
\emph{Minimal dilations of quantum dynamical semigroups to semigroups of
  endomorphisms of $C^*$--algebras},
J.\ Ramanujan Math.\ Soc. \textbf{14} (1999), 109--124.

\bibitem{Bhat03} B. V. R.  Bhat, 
\emph{Atomic dilations}, 
Advances in quantum dynamics (South Hadley, MA, 2002), 99--107, Contemp. Math. \textbf{335}, Amer. Math. Soc., Providence, RI, 2003.


\bibitem {BS00}B. V. R. Bhat and M. Skeide, 
\emph{Tensor product systems of
Hilbert modules and dilations of completely positive semigroups}, 
Infin. Dimens. Anal. Quantum Probab. Relat. Top. \textbf{3} (2000), 519--575.


\bibitem {SzNF70} C. Foia\c{s} and B. Sz.-Nagy, 
\emph{Harmonic Analysis of Operators in Hilbert Space}, 
North-Holland, Amsterdam, 1970.


\bibitem{KR} R. V. Kadison and J. Ringrose, 
\emph{Fundamentals of the Theory of Operator Algebras},
 vols. I and II,
Academic Press, New-York, 1982.




%

\bibitem {MS02}P. Muhly and B. Solel, \emph{Quantum Markov Processes
(Correspondences and Dilations)}, 
Internat. J. Math. \textbf{13} (2002), no. 8, 863--906.

\bibitem{Parrot} S. Parrot, 
\emph{Unitary dilations for commuting contractions}, 
Pacific J. Math. \textbf{34} (1970), 481--490.





%

\bibitem {SeLegue}D. SeLegue, \emph{Minimal Dilations of CP maps and }$C^{\ast}%
$-\emph{Extension of the Szeg\"{o} Limit Theorem, }Ph.D Dissertation,
University of California, Berkeley, 1997.

%

\bibitem {Shalit08} O. M. Shalit, \emph{E$_0$-dilation of strongly commuting $CP_0$-semigroups}, J. Funct. Anal. \textbf{255} (2008), no. 1, 46--89.

\bibitem {Shalit09} O. M. Shalit, \emph{E-dilation of strongly commuting CP-semigroups (the nonunital case)}, 
to appear in Houston J. Math. Preprint available at arXiv:0711.2885v3 [math.OA], 2009.

\bibitem{ShalitSolel} O. M. Shalit and B. Solel, 
\emph{Subproduct systems}, 
Doc. Math.  \textbf{14}  (2009), 801--868.



%


%



\bibitem {Slocinski} M. S\l oci\'nski, 
\emph{Unitary dilation of two-parameter semi-groups of contractions}, 
Bull. Acad. Polon. Sci. Se'r. Sci. Math. Astronom. Phys. \textbf{22} (1974), 1011--1014.


\bibitem {Solel06} B. Solel, 
\emph{Representations of product systems over semigroups and dilations of commuting
CP maps}, 
J. Funct. Anal. \textbf{235} (2006), no. 2, 593--618.


\end{thebibliography}

\end{document}